\newtheorem{thm}{Theorem}
\newtheorem{cor}[thm]{Corollary}
\newtheorem*{remark}{Remark}
\title{Asymptotic density of Motzkin numbers modulo small primes}
\author{Rob Burns}
\begin{document}
\maketitle
\begin{abstract}
We establish the asymptotic density of the Motzkin numbers modulo $2$, $4$, $8$, $3$ and $5$.
\end{abstract}

\section{Introduction}
%\section{}
%\subsection{}
The Motzkin numbers $M_n$ are defined by
$$
M_n := \sum_{k \geq 0} \binom {n}{2k} C_{k}
$$
where $C_{k}\, $ are the Catalan numbers. 

There has been some work in recent years on analysing the Motzkin numbers $M_n$ modulo primes and prime powers.  This work has often been done in concert with and using the same methods as work analysing the Catalan numbers. Deutsch and Sagan \cite{Sagan2006} provided a characterisation of Motzkin numbers divisible by $2, 4$ and $5$. They also provided a complete characterisation of the Motzkin numbers modulo $3$ and showed that no Motzkin number is divisible by $8$. Eu, Liu and Yeh \cite{Eu2008} reproved some of these results and extended them to include criteria for when $M_n$ is congruent to $\{ 2, 4, 6 \} \mod 8$. Krattenthaler and M{\"u}ller \cite{KM2013} established identities for the Motzkin numbers modulo higher powers of 3 which include the modulo 3 result of  \cite{Sagan2006} as a special case. Krattenthaler and M{\"u}ller \cite{Krat2016} have more recently extended this work to a full characterisation of \mbox{$M_n \mod 8$} in terms of the binary expansion of $n$. Their characterisation is rather elaborate and less susceptible to analysis than that provided in \cite{Eu2008}. The results in \cite{KM2013} and \cite{Krat2016} are obtained by expressing the generating function of $M_n$ as a polynomial involving a special function. Rowland and Yassawi \cite{RY2013} investigated $M_n$ in the general setting of automatic sequences.  The values of $M_n$ (as well as other sequences) modulo prime powers can be computed via automata. Rowland and Yassawi provided algorithms for creating the relevant automata. They established results for $M_n$ modulo small prime powers, including a full characterisation of $M_n$ modulo 8 (modulo $5^2$ and $13^2$ are available from Rowlands website). They also established that $0$ is a forbidden residue for $M_n$ modulo $8$, $5^2$ and $13^2$. In theory the automata can be constructed for any prime power but computing power and memory quickly becomes a barrier. For example, the automata for $M_n$ modulo $13^2$ has over 2000 states. Rowland and Yassawi also went on to describe a method for obtaining asymptotic densities of $M_n$.

We will use the above results to establish asymptotic densities of $M_n$ modulo $2, 4, 8, 3$ and $5$. Here, the asymptotic density of a subset $S$ of $\mathbb{N}$ is defined to be
$$
\lim_{N \to \infty} \frac {1}{N} \#\{ n \in S : n \leq N \}
$$
if the limit exists, where $\#S$ is the number of elements in a set $S$. In contrast to the results for the Catalan numbers $C_n$, the set of Motzkin numbers congruent to $0 \mod n$ is not expected to have asymptotic density $1$ for a general $n \in \mathbb{N}$. The results here show that this expectation holds for small values of $n$.

\section{Asymptotic density of certain forms of numbers}
The main method in the literature of characterising $M_n \mod q$ is to divide the natural numbers into classes of the form
$$
S(\, q, r, s, t ) = \{ (\, qi + r )\, q^{sj + t} + c: i, j \in \mathbb{N} \} 
$$
for various choices of $r, s, t$ and $c$. It will therefore be useful to know how these types of sets behave asymptotically. We can disregard the $c$ term as this does not change the asymptotic behaviour. So the set of interest is
\begin{equation}
\label{S}
S(\, q, r, s, t )\, = \{ (\, qi + r )\, q^{sj + t} : i, j \in \mathbb{N} \}
\end{equation}
for integers $q, r, s, t$.
\bigskip

\begin{thm}
\label{a}
Let $q, r, s, t \in \mathbb{Z} $ with $q, s > 0$, $t \geq 0$ and $0 \leq r < q$. Then the asymptotic density of the set $S$ is $(\, q^{t + 1 - s} (\, q^{s} - 1 )\, )\, ^{-1} $. 
\end{thm}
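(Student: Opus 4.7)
The plan is to decompose $S$ as a disjoint union of arithmetic progressions indexed by $j$ and to sum the resulting densities. For each $j\ge 0$, set $A_j := \{(qi+r)q^{sj+t} : i\in\mathbb{N}\}$, an arithmetic progression with common difference $q^{sj+t+1}$ and hence asymptotic density $1/q^{sj+t+1}$. The target value is precisely the formal geometric sum
$$\sum_{j=0}^{\infty}\frac{1}{q^{sj+t+1}} \;=\; \frac{1}{q^{t+1}(1-q^{-s})} \;=\; \frac{1}{q^{t+1-s}(q^s-1)},$$
so the proof reduces to (i) showing the $A_j$ genuinely partition $S$ and (ii) justifying that the asymptotic density of $S$ equals this infinite sum.

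For (i) I would use $q$-adic valuations: assuming $1\le r<q$ so that $r\not\equiv 0\pmod q$, each element of $A_j$ factors as $q^{sj+t}(qi+r)$ with $\gcd(qi+r,q)=1$, so its $q$-adic valuation is exactly $sj+t$. Hence distinct $j$ yield disjoint sets and $\#(S\cap[1,N]) = \sum_{j}\#(A_j\cap[1,N])$. (The degenerate case $r=0$ gives $A_0\supseteq A_1\supseteq\cdots$ with $S = q^{t+1}\mathbb{N}$, whose density $1/q^{t+1}$ does not agree with the stated formula, so I read the hypothesis as implicitly requiring $r\ne 0$, which is the case in all applications.)

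For (ii), the upper bound comes from $\#(A_j\cap[1,N])\le Nq^{-(sj+t+1)}+1$ together with the observation that $A_j\cap[1,N]=\varnothing$ as soon as $rq^{sj+t}>N$, leaving only $O(\log N)$ non-empty indices; dividing by $N$ yields
$$\frac{\#(S\cap[1,N])}{N}\;\le\;\sum_{j=0}^{\infty}\frac{1}{q^{sj+t+1}}+O\!\left(\frac{\log N}{N}\right).$$
For the matching lower bound I would truncate at an arbitrary fixed $J$: $\#(S\cap[1,N])\ge\sum_{j=0}^{J}\#(A_j\cap[1,N])$, take $\liminf$ in $N$ to obtain $\sum_{j=0}^{J}1/q^{sj+t+1}$, and then let $J\to\infty$. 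The only real obstacle is this exchange of $\liminf$/$\limsup$ with the infinite sum, since asymptotic density fails to be countably additive in general; once the elementary tail and truncation estimates above are in place, the closed form of the geometric series delivers the result.
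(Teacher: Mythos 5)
Your proof is correct and follows essentially the same route as the paper: for each $j$ you count the elements of the progression $(qi+r)q^{sj+t}$ up to $N$, observe that only $O(\log N)$ values of $j$ contribute, and sum the geometric series, which is exactly the paper's computation with its error term $E'=O(U)=O(\log N)$. Your additional points --- that disjointness of the $A_j$ requires $r\not\equiv 0 \pmod q$ (so the hypothesis $0\le r<q$ should really read $1\le r<q$, as in all the paper's applications), and the explicit truncation argument justifying the interchange of limit and sum --- merely make rigorous steps the paper leaves implicit.
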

\begin{proof}
We have, for fixed $j \geq 0$,
$$
\#\{ i \geq 0: (\, qi + r)\, q^{sj + t} \leq N \} = \frac {N}{q^{sj+t+1}} - \frac {r}{q} - E(\, j, N, q, r, s, t )
$$
where $0 \leq E( j, N, q, r, s, t )  < 1$ is an error term introduced by not rounding down to the nearest integer. So, letting
$$
U(N, s, t) := \lfloor  \frac {\log_q (\, N )\, - t - 1}{s} \rfloor 
$$,
we have
$$
\mbox{$\#\{n < N: n = (\, qi + r)\, q^{sj + t} \,$ for some   $i, j \in \mathbb{N} \}$}
$$
$$
= \sum_{j \geq 0} (\, \frac {N}{q^{sj + t + 1}}  - \frac {r}{q} - E(\, j, N, q, r, s, t )\, )\, 
$$
$$
= \sum_{j = 0}^{U} (\, \frac {N}{q^{sj+t + 1}}  - \frac {r}{q} - E(\, j, N, q, r, s, t )\, )\,
$$
$$
= \frac {N}{q^{t + 1} } \sum_{j = 0}^{U} (\, \frac {1}{q^{s}} )\,^{j} - E^{'}(\, N, q, r, s, t )\,
$$
where the new error term $E^{'} (\, N, q, r, s, t )\,$ satisfies 
$$
0 < E^{'} ( N, q, r, s, t ) < 2( U + 1 ) .
$$
Then
$$
\mbox{ $\#\{n < N: n = (\, qi + r)\, q^{sj + t}$  for some   $i, j \in \mathbb{N} \}$ }
$$
$$
= (\, \frac {N}{q^{t+1}} )\, (\, 1 - (\, \frac {1}{q^{s}} )\, ^{U + 1} )\,  (\, 1 - \frac {1}{q^{s} } )\, ^{-1} - E^{'}.
$$
Since $\lim_{N \to \infty} \frac {E^{'}(\, N, q, r, s, t )\, }{N} = 0$ and $\lim_{N \to \infty} \frac {1}{N} (\, \frac {1}{q^{s} } )\, ^{U + 1} = 0$ we have
$$
\mbox{ $\lim_{N \to \infty} \frac {1}{N} \#\{n < N: n = (\, qi + r)\, q^{sj + t}$  for some   $i, j \in \mathbb{N} \}$}
$$
$$
= (\, q^{t + 1 - s} (\, q^{s} - 1 )\,)\, ^{-1}.
$$
\end{proof}

\bigskip

\begin{remark}
\label{remS'}
Sometimes we will need to consider the set
\begin{equation}
\label{S'}
S^{'}(\, q, r, s, t )\, = \{ (\, qi + r )\, q^{sj + t} : i, j \in \mathbb{N}, j \geq 1 \}
\end{equation}
for integers $q, r, s, t$.
The asymptotic density of the set $S^{'}$ can be derived from theorem~\ref{a} as \mbox{$(\, q^{t + 1} (\, q^{s} - 1 )\, )\, ^{-1} $}.
\end{remark}
\bigskip

\section{Motzkin numbers modulo $2$, $4$ and $8$}
\label{mod2^k}
The following result is established in \cite{Eu2008}
\begin{thm}
\label{5.5}
(Theorem 5.5 of \cite{Eu2008}). The $n$th Motzkin number $M_n$ is even if and only if 
$$
\mbox{$n = (4i + \epsilon)4^{j+1} - \delta$ for $i, j \in \mathbb{N}, \epsilon \in \{1, 3\}$ and $\delta \in \{1, 2\}$. }
$$ 
Moreover, we have
$$
\mbox{$M_n \equiv 4 \mod 8$ if $(\, \epsilon, \delta )\, = (\, 1, 1 )\,$ or $(\, 3, 2 )\,$ }
$$
$$
\mbox{$M_n \equiv 4y + 2 \mod 8 $ if $(\, \epsilon, \delta )\, = (\, 1, 2 )\, or (\, 3, 1 )\,$ }
$$
where $y$ is the number of digit $1$Õs in the base 2 representation of $4i + \epsilon - 1$.
\end{thm}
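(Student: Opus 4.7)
The plan is to work via the generating function $M(x) = \sum_{n \geq 0} M_n x^n$, which by the standard Motzkin functional equation $x^2 M(x)^2 - (1-x) M(x) + 1 = 0$ has the closed form
$$M(x) = \frac{1 - x - \sqrt{(1+x)(1-3x)}}{2x^2},$$
after using the factorization $1 - 2x - 3x^2 = (1+x)(1-3x)$. To read off congruences modulo $8$ I would treat this as an identity in $\mathbb{Z}_2[[x]]$.

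First I would exploit $(1+x)(1-3x) = (1+x)^2 \bigl(1 - \tfrac{4x}{1+x}\bigr)$ together with the classical identity $\sqrt{1-4z} = 1 - 2z\, C(z)$, where $C(z) = \sum_{k \geq 0} C_k z^k$ is the Catalan generating function, to obtain
$$\sqrt{(1+x)(1-3x)} = (1+x) - 2x \sum_{k \geq 0} \frac{C_k\, x^k}{(1+x)^k}.$$
Substituting into the quadratic-formula expression for $M(x)$ collapses the leading terms and yields
$$M(x) = \sum_{k \geq 0} \frac{C_{k+1}\, x^k}{(1+x)^{k+1}},$$
which, after expanding $(1+x)^{-(k+1)}$ as a binomial series, gives the well-known identity
$$M_n = \sum_{k=0}^{n} (-1)^{n-k} \binom{n}{k} C_{k+1}.$$

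Next I would analyse this sum modulo $8$ under the hypothesis $n = (4i+\epsilon)4^{j+1} - \delta$. By Kummer's theorem, $v_2(C_{k+1}) = s_2(k+2) - 1$, where $s_2$ denotes binary digit sum, and $v_2\!\bigl(\binom{n}{k}\bigr)$ equals the number of carries when adding $k$ and $n-k$ in base $2$. Both quantities interact cleanly with the specific base-$2$ form of $n$: writing $n = (4i + \epsilon)4^{j+1} - \delta$ forces the low-order binary digits of $n$ to be extremely structured (a long block of $1$'s coming from the subtraction of $\delta \in \{1,2\}$ from a multiple of $4^{j+1}$). I would therefore use this structure to pin down which values of $k$ give $v_2\bigl(C_{k+1}\binom{n}{k}\bigr) \leq 2$, case-split on $(\epsilon, \delta) \in \{1,3\} \times \{1,2\}$, and sum the surviving contributions modulo $8$.

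The hardest part will be deducing the exact shape $4y + 2 \pmod{8}$ in the cases $(\epsilon, \delta) \in \{(1,2), (3,1)\}$. A single ``generic'' term should contribute the $2$, while the $4y$ piece ought to arise as a signed sum over contributions of $2$-adic valuation exactly $2$, with each sign determined by a binary digit of $4i + \epsilon - 1$; the bookkeeping to identify $y$ as the full digit count (as opposed to some subtler linear combination of digits) is where the real content sits. I would likely carry this out by induction on $j$, handling the base case $j = 0$ by direct computation and inducting by relating $M_{(4i+\epsilon)4^{j+2} - \delta}$ to $M_{(4i+\epsilon)4^{j+1} - \delta}$ through a substitution $x \mapsto x^4$ in the identity for $M(x)$ above.
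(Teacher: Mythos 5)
This statement is not proved in the paper at all: it is quoted verbatim as Theorem 5.5 of Eu, Liu and Yeh \cite{Eu2008}, so there is no in-paper argument to compare against, and your attempt has to be judged as a free-standing proof of the Eu--Liu--Yeh result. The first half of your sketch is fine: the manipulation $\sqrt{(1+x)(1-3x)}=(1+x)-2x\,C\bigl(x/(1+x)\bigr)$ and the resulting identity $M_n=\sum_{k=0}^{n}(-1)^{n-k}\binom{n}{k}C_{k+1}$ are correct, and the valuation facts you invoke ($v_2(C_{k+1})=s_2(k+2)-1$, Kummer's carry count for $v_2\binom{n}{k}$) are also correct.

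The gap is that everything after that---which is the entire content of the theorem---is a plan rather than an argument. Two specific problems. First, valuations alone cannot give residues modulo $8$: locating the terms with $v_2\bigl(\binom{n}{k}C_{k+1}\bigr)\le 2$ is not enough, because you then need the odd parts of those terms modulo $8$, $4$ and $2$ respectively, which requires Anton/Davis--Webb/Granville-type congruences for $\binom{n}{k}$ modulo powers of $2$, and you must control cancellation in an alternating sum with up to $n+1$ low-valuation terms; moreover the ``if and only if'' direction forces you to prove that for \emph{every} $n$ not of the form $(4i+\epsilon)4^{j+1}-\delta$ the surviving terms sum to an odd number, an exact statement your digit-structure heuristic does not address. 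Second, the proposed induction on $j$ via the substitution $x\mapsto x^4$ is unsupported: Frobenius gives $M(x)^2\equiv M(x^2)\bmod 2$, but modulo $8$ there is no simple relation between $M(x)$ and $M(x^4)$; producing a usable one is precisely the hard work carried out (by quite different means) in \cite{Eu2008}, \cite{Krat2016} and \cite{RY2013}, where the mod-$8$ behaviour is extracted from explicit polynomial expressions for the generating function or from a finite automaton. Eu--Liu--Yeh themselves argue by expanding $\sqrt{1-4x}$-type series modulo $2^k$ and reading off coefficient formulas, not by term-by-term $2$-adic analysis of the binomial--Catalan sum, so even the part of your plan that could work still needs the detailed bookkeeping you explicitly defer---and that bookkeeping, especially the identification of the exponent $y$ as the full binary digit count of $4i+\epsilon-1$, is the theorem.
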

\bigskip

\begin{remark}{}
The $4$ choices of $(\, \epsilon, \delta )\,$ in the above theorem give $4$ disjoint sets of numbers \mbox{$n = (4i + \epsilon)4^{j+1} - \delta$}.
\end{remark}

\bigskip
\begin{thm}
\label{b}
Each of the 4 disjoint sets defined by the choice of $(\, \epsilon, \delta )\,$ in Theorem~\ref{5.5} has asymptotic density $\frac {1}{12}$ in the natural numbers.
\end{thm}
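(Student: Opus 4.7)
The strategy is to recognize each of the four sets as a translate of a set of the form $S(q,r,s,t)$ from equation~(\ref{S}) and to read off the density directly from Theorem~\ref{a}.

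Fix $\epsilon \in \{1,3\}$ and $\delta \in \{1,2\}$. Then the set of $n$ coming from this choice of $(\epsilon,\delta)$ is
$$
\{(4i+\epsilon)4^{j+1} - \delta : i, j \in \mathbb{N}\} = S(4, \epsilon, 1, 1) - \delta,
$$
i.e.\ the translate of $S(4,\epsilon,1,1)$ by the constant $-\delta$. Asymptotic density is translation invariant, since shifting by a fixed integer $-\delta$ alters $\#\{n \le N : n \in \text{set}\}$ by at most $|\delta|$, which becomes negligible after dividing by $N$. Hence it suffices to compute the density of $S(4,\epsilon,1,1)$.

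I would then check that the hypotheses of Theorem~\ref{a} hold with $q=4$, $r=\epsilon$, $s=1$, $t=1$: we have $q, s > 0$, $t \ge 0$, and $0 \le \epsilon < 4$ in both cases $\epsilon \in \{1,3\}$. Applying the theorem yields density
$$
\bigl(q^{t+1-s}(q^s-1)\bigr)^{-1} = \bigl(4^{1}\cdot 3\bigr)^{-1} = \frac{1}{12},
$$
independently of which of the four $(\epsilon,\delta)$ pairs was chosen.

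There is essentially no substantive obstacle here: the theorem is a direct corollary of Theorem~\ref{a}, and the only remark requiring explicit justification is the translation invariance of asymptotic density, which is immediate from the definition.
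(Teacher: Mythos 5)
Your proposal is correct and follows essentially the same route as the paper: the paper's proof is simply an application of Theorem~\ref{a} with $q=4$, $r=\epsilon$, $s=1$, $t=1$, with the shift by $-\delta$ dismissed (as the paper does earlier when discarding the constant $c$) because translation does not affect asymptotic density. Your explicit verification of the hypotheses and of the translation-invariance step just makes the paper's one-line argument more careful.
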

\begin{proof}
Use the result of Theorem~\ref{a} for the set $S$  with $q = 4, r = \epsilon, s = 1, t = 1$.
\end{proof}
\bigskip
\begin{cor}
The asymptotic density of
$$
\mbox{$\{ n < N: M_n \equiv 0 \mod 2 \}$ is $\frac {1}{3}$.}
$$
The asymptotic density of
$$
\mbox{$\{ n < N: M_n \equiv 4 \mod 8 \}$ is $\frac {1}{6}$.}
$$
The asymptotic density of each the sets
$$
\mbox{$\{ n < N: M_n \equiv 2 \mod 8 \}$ and $\{ n < N: M_n \equiv 6 \mod 8 \}$ is $\frac {1}{12}$.}
$$
\end{cor}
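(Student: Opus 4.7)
The first two assertions follow directly from Theorem~\ref{b}. Theorem~\ref{5.5} expresses $\{n : M_n \equiv 0 \mod 2\}$ as the disjoint union of the four sets indexed by $(\epsilon, \delta) \in \{1,3\} \times \{1,2\}$, each of density $\frac{1}{12}$, giving total density $\frac{1}{3}$. Likewise, $\{n : M_n \equiv 4 \mod 8\}$ is the disjoint union of the two sets with $(\epsilon, \delta) \in \{(1,1), (3,2)\}$, giving density $\frac{1}{6}$.

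The third assertion needs an additional ingredient. By Theorem~\ref{5.5}, $M_n$ is congruent to $2$ or $6$ modulo $8$ exactly when $(\epsilon, \delta) \in \{(1,2), (3,1)\}$, and in these cases $M_n \equiv 2 \mod 8$ or $M_n \equiv 6 \mod 8$ according as $y = s_2(4i + \epsilon - 1)$ is even or odd. Since $s_2(4i) = s_2(i)$ and $s_2(4i+2) = s_2(i) + 1$, the parity of $y$ is controlled by the parity of $s_2(i)$: for $\epsilon = 1$ the two parities coincide, while for $\epsilon = 3$ they are opposite. The plan is therefore to re-run the counting argument of Theorem~\ref{a} after restricting the inner sum over $i$ to a prescribed parity $k \in \{0,1\}$ of $s_2(i)$.

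The key analytic input is the classical Thue--Morse bound
$$
\#\{0 \leq i < I : s_2(i) \equiv k \mod 2\} = \tfrac{I}{2} + O(1) \quad \text{for } k \in \{0,1\},
$$
which is an easy induction on the leading binary digit of $I$ (using $\sum_{i<I}(-1)^{s_2(i)} \in \{-1,0,1\}$). Substituting this into the count for each fixed $j$ halves the main term of Theorem~\ref{a} and introduces only an additive constant error; summing over $0 \leq j \leq U \sim \log_4 N$ produces a total error of $O(\log N) = o(N)$. Hence each of the four combinations of $(\epsilon, \delta) \in \{(1,2), (3,1)\}$ with $k \in \{0,1\}$ contributes asymptotic density $\frac{1}{24}$. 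Combining the two combinations that land in the same residue class---namely $(\epsilon, \delta) = (1,2)$ with $s_2(i)$ even together with $(\epsilon, \delta) = (3,1)$ with $s_2(i)$ odd for $M_n \equiv 2 \mod 8$, and the mirror pair for $M_n \equiv 6 \mod 8$---yields density $\frac{1}{12}$ in each case. The main obstacle is purely bookkeeping: one must carefully track the sign flip between $\epsilon = 1$ and $\epsilon = 3$ so as to pair the correct $(\epsilon, k)$ contributions, after which the rest is a routine modification of Theorem~\ref{a}.
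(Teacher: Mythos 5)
Your proposal is correct and follows essentially the same route as the paper: the first two densities come from the disjoint decomposition of Theorem~\ref{5.5} together with Theorem~\ref{b}, and the third from the fact that the parity of the binary digit sum of $4i+\epsilon-1$ is equidistributed, splitting the combined density $\frac{1}{6}$ evenly. The only difference is that you make the equidistribution step quantitative (via the bounded Thue--Morse partial sums and a re-run of the count in Theorem~\ref{a}), whereas the paper simply asserts that the parity is equally likely to be odd or even; your version supplies the detail but is not a different argument.
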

\begin{proof}
The first 2 statements of the corollary follow immediately from theorem~\ref{5.5} and theorem~\ref{b}. The third statement follows from the observation that the numbers of $1$'s in the base 2 expansion of $i$ is equally likely to be odd or even and therefore the same applies to the the number of $1$'s in the base 2 expansion of $4i + \epsilon - 1$. Since the asymptotic density of the 2 sets combined is $\frac {1}{6}$ (from theorem~\ref{b}), each of the two sets has asymptotic density $\frac {1}{12}$.
\end{proof}
\begin{remark}{}
Rowland and Yassawi \cite{RY2013} proved the first two results of the corollary and also established that the asymptotic density of the sets of $M_n$ congruent to $2$ modulo $4$ is $\frac{1}{6}$.
\end{remark}

\section{Motzkin numbers modulo 5}
The following result is established in \cite{Sagan2006}
\bigskip
\begin{thm}
\label{m5.1}
(Theorem 5.4 of \cite{Sagan2006}). The Motzkin number $M_n$ is divisible by $5$ if and only if $n$ is one of the following forms
$$
(\, 5i + 1 )\, 5^{2j}  - 2,\, (\, 5i + 2 )\, 5^{2j-1}  - 1,\, (\, 5i + 3 )\, 5^{2j-1}  - 2,\,  (\, 5i + 4 )\, 5^{2j}  - 1
$$
where $i, j \in \mathbb{N}$ and $j \geq 1$.
\end{thm}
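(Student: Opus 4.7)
The plan is to work with the generating function $M(x) = (1 - x - \sqrt{1 - 2x - 3x^2})/(2x^2)$ reduced modulo $5$, and derive a functional equation that expresses $M(x) \bmod 5$ in terms of $M$ evaluated at a power of $x$. The appearance of $5^{2j}$ and $5^{2j-1}$ in the theorem (rather than arbitrary powers of $5$) strongly suggests that the natural period of the iteration is $2$, i.e., one should seek an identity relating $M(x)$ to $M(x^{25})$ modulo $5$.

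First I would factor $1 - 2x - 3x^2 \equiv (1+x)(1-3x) \pmod{5}$ and try to find polynomials $A(x), B(x) \in \mathbb{F}_5[x]$ such that
$$
\sqrt{1 - 2x - 3x^2} \equiv A(x) + B(x) \sqrt{1 - 2x^{25} - 3x^{50}} \pmod{5},
$$
equivalently a functional equation $M(x) \equiv C(x) + D(x) \cdot x^{\alpha} M(x^{25}) \pmod{5}$ for explicit $C, D \in \mathbb{F}_5[x]$. The tool here is the Frobenius identity $f(x)^5 \equiv f(x^5) \pmod 5$; in practice one computes enough terms of $\sqrt{1-2x-3x^2}$ modulo $5$ to conjecture $A$ and $B$ and then verifies by squaring both sides. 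Given such a functional equation, coefficient extraction produces a recurrence of the shape $M_n \equiv \gamma(d_0, d_1) \cdot M_{\lfloor n/25 \rfloor} \pmod 5$, where $\gamma$ depends on the bottom two base-$5$ digits of $n$ (with boundary adjustments for small $n$). Iterating and tracking which digit pairs force $\gamma = 0$ yields a base-$5$ digit characterization of $\{\,n : M_n \equiv 0 \pmod 5\,\}$, which can then be rewritten in terms of the base-$5$ expansions of $n+1$ and $n+2$ to match the four arithmetic forms in the statement.

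The hard part will be the first step: finding the correct polynomial functional equation for $\sqrt{1-2x-3x^2}$ modulo $5$. The mod-$2$ case is much easier because squaring is a ring endomorphism and many cross terms vanish, but modulo $5$ one must work inside $\mathbb{F}_5[[x]]$ where the square-root structure is richer (and both $2$ and $3$ are non-residues in $\mathbb{F}_5^{\times}$). A secondary difficulty is the translation at the end: verifying that the iterated digit conditions match precisely the four shapes $(5i+\epsilon)\,5^{k} - \delta$ requires careful bookkeeping, especially the alternation between even and odd exponents. If the direct algebraic route proves too painful, the fallback is to construct the Rowland--Yassawi automaton for $M_n \bmod 5$ and read the characterization off its transition diagram, then recognise the four arithmetic forms as the descriptions of the paths to the zero-output states.
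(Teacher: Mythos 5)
You should first note that the paper does not prove this statement at all: it is imported verbatim as Theorem 5.4 of Deutsch and Sagan \cite{Sagan2006} and used only as input to the density computation, so the only proof to compare with is the one in that reference. Measured as a proof, your proposal is a plan rather than an argument: the decisive object --- an explicit, verified functional equation modulo $5$ for $\sqrt{1-2x-3x^2}$ (equivalently for $M(x)$) relating $x$ to $x^{25}$ --- is only conjectured to exist, and the ``careful bookkeeping'' that is supposed to convert it into the four arithmetic forms is never carried out. You even flag both steps as the hard parts and offer the Rowland--Yassawi automaton as a fallback; but constructing that automaton and reading off the zero states is precisely the work that remains undone, so nothing is established as written.

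There is also a concrete structural problem with the recurrence shape you posit. A relation $M_n \equiv \gamma(d_0,d_1)\cdot M_{\lfloor n/25\rfloor} \pmod 5$, with $\gamma$ depending only on the two lowest base-$5$ digits of $n$, cannot reproduce the theorem: if some digit pair had $\gamma=0$, then almost every $n$ contains that pair somewhere in its base-$25$ expansion and the zero set would have density $1$; if no pair had $\gamma=0$, then divisibility of $M_n$ by $5$ would be decided by the leading digits of $n$ alone. Neither is compatible with the statement being proved, whose zero set has density $\frac{1}{10}$ (as the paper computes) and is governed by the \emph{trailing} base-$5$ structure of $n+1$ and $n+2$. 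What Frobenius/Christol actually gives is that $M \bmod 5$ spans a finite-dimensional space under the digit-extraction (Cartier) operators, i.e.\ a coupled system of several auxiliary series --- equivalently a multi-state automaton --- and coefficient extraction from $D(x)x^{\alpha}M(x^{25})$ produces shifted indices $M_{(n-\alpha-r)/25}$ for several $r$, which is exactly how the offsets $-1$ and $-2$ and the even/odd alternation of the exponent in the four forms arise; collapsing everything to a scalar multiple of $M_{\lfloor n/25\rfloor}$ loses that structure. To complete the proof you must either derive and verify the full system (or the explicit $M_{5n+r}$-type congruences, which is essentially Deutsch and Sagan's route) and then do the digit analysis, or genuinely build the mod-$5$ automaton and classify the paths into its zero-output states.
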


\bigskip

\begin{thm}
\label{m5.2}
The asymptotic density of numbers of the first form in theorem~\ref{m5.1} is $\frac {1}{120}$. Numbers of the fourth form also have asymptotic density $\frac {1}{120}$. The asymptotic density of numbers of the second and third forms in theorem~\ref{m5.1} is $\frac {1}{24}$ each.
\end{thm}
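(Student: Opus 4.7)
The plan is to reduce each of the four forms in Theorem~\ref{m5.1} to an instance of the set $S$ or $S'$ from \eqref{S} and \eqref{S'}, and then read off the asymptotic density from Theorem~\ref{a} or Remark~\ref{remS'}. As already observed at the start of Section~2, the additive constants $-1$ and $-2$ can be ignored since they do not affect asymptotic density, so I only need to handle the multiplicative structure $(5i+r)5^e$.

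For the first and fourth forms, the exponent is $2j$ with $j\geq 1$, which is exactly the shape $q^{sj+t}$ with $q=5$, $s=2$, $t=0$ and the restriction $j\geq 1$ built into $S'$. Thus the first form is $S'(5,1,2,0)$ and the fourth is $S'(5,4,2,0)$, and Remark~\ref{remS'} gives density $(q^{t+1}(q^s-1))^{-1}=(5\cdot 24)^{-1}=1/120$ for each. For the second and third forms, the exponent is $2j-1$ with $j\geq 1$, which I would reindex by setting $j':=j-1\geq 0$ so the exponent becomes $2j'+1$. This is precisely $S(5,r,2,1)$ with $r=2$ or $r=3$, and Theorem~\ref{a} gives density $(q^{t+1-s}(q^s-1))^{-1}=(5^{0}\cdot 24)^{-1}=1/24$ for each.

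There is essentially no obstacle here beyond bookkeeping: the only thing to be careful about is whether the index on $j$ starts at $0$ or $1$, since this is exactly the difference between applying Theorem~\ref{a} and Remark~\ref{remS'}. Because Theorem~\ref{m5.1} requires $j\geq 1$ in all four forms, forms one and four need $S'$ (the exponent $2j$ never equals $0$), while forms two and three can be handled by $S$ after the reindexing $j'=j-1$ that absorbs the shift into the parameter $t$. Once the parameters are identified correctly, the stated densities $1/120$, $1/24$, $1/24$, $1/120$ follow immediately by substitution.
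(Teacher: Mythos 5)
Your proposal is correct and follows essentially the same route as the paper: drop the additive constants, treat the first and fourth forms as $S'(5,r,2,0)$ via Remark~\ref{remS'} to get $\frac{1}{120}$, and reindex $j'=j-1$ so the second and third forms become $S(5,r,2,1)$, giving $\frac{1}{24}$ by Theorem~\ref{a}. The parameter identifications and the resulting densities all match the paper's argument.
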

\begin{proof}
Firstly consider numbers of the form $(\, 5i + r)\, 5^{2j} - 2\,$. As we are interested in asymptotic density it is enough to look at numbers of the form $(\, 5i + r)\, 5^{2j}\,$. We can now use the remark~\ref{remS'} at the end of theorem~\ref{a} for the set $S^{'}$ with $q=5, s=2$ and $t=0$. From the remark the asymptotic density of the set
$$
\mbox{$\{ n \in \mathbb{N}: n = (\, 5i + r )\, 5^{2j}$ with $i, j \in \mathbb{N}$ and $j \geq 1$ \} }
$$
is $(5 \times (5^2 - 1) )^{-1} =  \frac{1}{120}$. For numbers of the second and third forms we shift the j index so that it starts from $0$ and use theorem~\ref{a} for the set $S$ with $q=5, s=2$ and $t=1$. From theorem~\ref{a} the asymptotic density of the set
$$
\mbox{$\{ n \in \mathbb{N}: n = (\, 5i + r )\, 5^{2j + 1}$ with $i, j \in \mathbb{N}$ and $j \geq 0$ \} }
$$ 
is $(5^0(5^2 - 1))^{-1} = \frac {1}{24}$.
\end{proof}

\begin{cor}
The asymptotic density of $\#\{n < N: M_{n} \equiv 0 \mod {5} \}$ is $\frac {1}{10} \, $.
\end{cor}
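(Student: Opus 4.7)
The plan is to reduce the corollary to a straightforward addition of the four densities supplied by Theorem~\ref{m5.2}, after checking that the four parametric families in Theorem~\ref{m5.1} partition the $n$ with $M_n \equiv 0 \bmod 5$ (up to asymptotically negligible overlap). So the first step is to read off from Theorem~\ref{m5.1} that
\[
\{n : M_n \equiv 0 \bmod 5\} \;=\; F_1 \cup F_2 \cup F_3 \cup F_4,
\]
where $F_1,\dots,F_4$ are the four forms listed there, and the next step is to verify that this union is disjoint.

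For disjointness I would argue as follows. Sets $F_1$ and $F_3$ both describe $n+2$, but $F_1$ writes $n+2 = (5i+1)5^{2j}$ and $F_3$ writes $n+2 = (5i+3)5^{2j-1}$; since in the decomposition $n+2 = m \cdot 5^{v}$ with $\gcd(m,5)=1$ both $v$ and $m \bmod 5$ are uniquely determined, and the two forms disagree on the parity of $v$ (also on $m \bmod 5$), they cannot overlap. The same argument handles $F_2$ versus $F_4$ (both describing $n+1$). For any pair across the two groups, note that $F_1, F_3$ force $n \equiv 3 \bmod 5$ (because $n+2$ is divisible by $5$, with $j \geq 1$), while $F_2, F_4$ force $n \equiv 4 \bmod 5$, so these cannot overlap either. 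This gives a genuine partition.

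Given disjointness, the asymptotic density of the union equals the sum of densities, so I would simply invoke Theorem~\ref{m5.2} and compute
\[
\frac{1}{120} + \frac{1}{24} + \frac{1}{24} + \frac{1}{120} \;=\; \frac{2}{120} + \frac{2}{24} \;=\; \frac{1}{60} + \frac{5}{60} \;=\; \frac{1}{10}.
\]
The only real step requiring thought is the disjointness verification; once that is in hand the conclusion is immediate. (If one were content with an inequality argument, one could skip strict disjointness and only need that pairwise intersections have density zero, which is even easier since any intersection is constrained to lie in a sparser arithmetic-exponential family.)
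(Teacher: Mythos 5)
Your proposal is correct and follows essentially the same route as the paper: sum the four densities from Theorem~\ref{m5.2} over the four forms of Theorem~\ref{m5.1}, using their disjointness, to get $\frac{1}{120}+\frac{1}{24}+\frac{1}{24}+\frac{1}{120}=\frac{1}{10}$. The only difference is that you actually verify the disjointness (via the $5$-adic valuation parity and the residue of $n$ modulo $5$), a detail the paper merely asserts, and your verification is sound.
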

\begin{proof}
The corollary follows immediately from theorem~\ref{m5.1} and theorem~\ref{m5.2} and the disjointness of the 4 forms of integers listed in theorem~\ref{m5.1}.
\end{proof}

\begin{remark}{}
Numerical tests also show that roughly 22.5\% of Motzkin numbers are congruent to each of $1, 2, 3, 4 \mod {5}$.
\end{remark}

\section{Motzkin numbers modulo 3}
The structure of the Motzkin numbers modulo $3$ is based on a set $T(01)$ which was defined by Deutsh and Sagan in \cite{Sagan2006}. The set $T(\, 01 )\, $ is the set of natural numbers which have a base 3 expansion containing only the digits $0$ and $1$. The following theorem from \cite{Sagan2006} will be used in this section.
\bigskip

\begin{thm}
\label{mod3-1}
(Corollary 4.10 of \cite{Sagan2006}). The Motzkin numbers satisfy
\begin{align*}
M_n &\equiv  -1 \mod 3 \quad if  \quad n \in 3T (\, 01 )\, - 1, \\
M_n &\equiv 1 \mod 3  \quad if  \quad n \in 3T(\, 01 ) \quad or \quad n \in 3T(\, 01 )\, - 2, \\
M_n &\equiv 0 \mod 3 \quad otherwise.
\end{align*}
\end{thm}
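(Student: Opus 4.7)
This statement is cited verbatim from \cite{Sagan2006}, so in the paper I would simply defer to that reference. For a self-contained reproof, my plan would be to compute $M(x) \pmod 3$ explicitly as a formal power series in $\mathbb{F}_3[[x]]$ and then read off the coefficients.

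Starting from the algebraic relation $x^2 M(x)^2 + (x-1) M(x) + 1 = 0$ satisfied by the Motzkin generating function, I would reduce modulo $3$, apply the quadratic formula (the discriminant is $(x-1)^2 - 4x^2 \equiv 1+x \pmod 3$), and fix the branch by requiring $M(0) = 1$. This yields
$$M(x) \equiv \frac{(1+x)^{1/2} + x - 1}{x^2} \pmod 3, \qquad \text{so } M_n \equiv [x^{n+2}] (1+x)^{1/2} \pmod 3.$$
The heart of the argument is then to identify $(1+x)^{1/2}$ explicitly in $\mathbb{F}_3[[x]]$.

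Using the characteristic-$3$ identity $(1+x)^3 = 1 + x^3$ and squaring, I would derive the self-similar recursion $(1+x)^{-1/2} \equiv (1+x) (1+x^3)^{-1/2} \pmod 3$, which iterates to give the infinite product
$$(1+x)^{-1/2} \equiv \prod_{k \geq 0} (1 + x^{3^k}) \equiv \sum_{k \in T(01)} x^k \pmod 3,$$
the last equality being the base-$3$ analogue of the usual sum-of-distinct-powers generating function. Multiplying by $(1+x)$ then gives $(1+x)^{1/2} \equiv (1+x) \sum_{k \in T(01)} x^k \pmod 3$, so
$$M_n \equiv [n+1 \in T(01)] + [n+2 \in T(01)] \pmod 3,$$
where $[\cdot]$ is the Iverson bracket.

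A short case split using the decomposition $T(01) = 3T(01) \sqcup (3T(01) + 1)$ then recovers the stated trichotomy: $M_n \equiv -1 \pmod 3$ exactly when both $n+1$ and $n+2$ lie in $T(01)$, which forces $n+1 \in 3T(01)$ and hence $n \in 3T(01) - 1$; $M_n \equiv 1 \pmod 3$ when exactly one lies in $T(01)$, which splits further into the subcases $n+2 \in 3T(01)$ (i.e.\ $n \in 3T(01) - 2$) or $n+1 \in 3T(01) + 1$ (i.e.\ $n \in 3T(01)$); and $M_n \equiv 0 \pmod 3$ otherwise. The most delicate step, which I would isolate and handle first, is the verification of the recursion for $(1+x)^{-1/2}$; everything after that is routine coefficient extraction.
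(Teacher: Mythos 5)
The paper offers no proof of this result---it is quoted verbatim as Corollary 4.10 of \cite{Sagan2006}---so your decision to defer to that reference is exactly what the paper does, and your supplementary sketch is correct: the reduction of the defining quadratic $x^2M^2+(x-1)M+1=0$ modulo $3$, the identity $(1+x)^{-1/2}\equiv\prod_{k\geq 0}(1+x^{3^k})\equiv\sum_{k\in T(01)}x^k \pmod 3$, the resulting formula $M_n\equiv[n+1\in T(01)]+[n+2\in T(01)] \pmod 3$, and the final case split via $T(01)=3T(01)\sqcup(3T(01)+1)$ all check out (and agree with small cases such as $M_2\equiv -1$, $M_4\equiv 0$). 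This is essentially the same generating-function-modulo-$p$ technique used in the cited source, so there is nothing further to reconcile with the paper.
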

\bigskip

We will first examine the nature of the set $T(\, 01 \,)$. We have,
\bigskip

\begin{thm}
\label{mod3-2}
The asymptotic density of the set $T(\, 01 \,)$ is zero.
\end{thm}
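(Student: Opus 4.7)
The plan is to exploit the fact that the base-$3$ description of $T(01)$ restricts each digit to only $2$ choices out of the available $3$, so that the counting function grows like $2^k$ while the cutoff grows like $3^k$. Because $(2/3)^k \to 0$, this will immediately force the density to be zero.

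First I would fix $k \geq 1$ and count the elements of $T(01)$ that are strictly less than $3^k$. Every such element has a base-$3$ expansion of length at most $k$, and each of the $k$ digit positions can independently be filled with a $0$ or a $1$. Hence there are exactly $2^k$ elements of $T(01)$ in the interval $[0, 3^k)$ (including $0$ itself, which contributes at most a harmless additive constant in what follows).

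Next, for an arbitrary $N \geq 1$, choose the unique integer $k \geq 0$ with $3^k \leq N < 3^{k+1}$. Then every element of $T(01)$ that is at most $N$ lies in $[0, 3^{k+1})$, so
\[
\#\{ n \in T(01) : n \leq N \} \;\leq\; 2^{k+1}.
\]
Dividing by $N$ and using $N \geq 3^k$ gives
\[
\frac{1}{N}\#\{ n \in T(01) : n \leq N \} \;\leq\; \frac{2^{k+1}}{3^k} \;=\; 2 \Bigl( \frac{2}{3} \Bigr)^{k}.
\]
Since $N \to \infty$ forces $k \to \infty$, the right-hand side tends to $0$, so the limit defining the asymptotic density exists and equals $0$.

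There is no real obstacle here; the only thing to be careful about is the off-by-one in the digit count and the fact that the density is defined as a limit, which requires a bound valid for all sufficiently large $N$ rather than only along the subsequence $N = 3^k$. The sandwich above handles both of these concerns at once.
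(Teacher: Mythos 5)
Your argument is correct and is essentially the same as the paper's: both choose $k$ with $3^k \leq N < 3^{k+1}$, bound the count of elements of $T(01)$ up to $N$ by $2^{k+1}$, and divide by $N \geq 3^k$ to get a bound tending to $0$. Your version just spells out the digit-counting step slightly more explicitly.
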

\begin{proof}
Let $N \in \mathbb{N}$ and choose $k \in \mathbb{N}: 3^{k} \leq N < 3^{k+1}$. Then $k = \lfloor log_3 (\, N )\, \rfloor$ and
\begin{gather*}
\frac {1}{N} \# \{ n \leq N :  n \in  T(01) \}  \\
\leq \frac {2^{k+1}}{N}  \\
\mbox{$\leq \frac {2^{k+1}}{3^{k}} \to 0$ as $N \to \infty$.} 
\end{gather*}
\end{proof}
\bigskip

\begin{thm}
The asymptotic density of the set  \mbox{$\{n \leq N: M_n \equiv 0 \mod 3 \}$ } is $1$.
\end{thm}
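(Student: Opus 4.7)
The plan is to pass to the complement: the set of $n$ for which $M_n \not\equiv 0 \pmod 3$ is, by Theorem~\ref{mod3-1}, contained in the union
\[
A := 3T(01) \;\cup\; (3T(01) - 1) \;\cup\; (3T(01) - 2),
\]
so it suffices to show that $A$ has asymptotic density zero in $\mathbb{N}$. Then the set $\{n \leq N : M_n \equiv 0 \pmod 3\}$, being the complement of a subset of $A$ (up to finitely many natural numbers), will have density $1$.

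To bound the density of $A$, I would treat each of the three pieces separately. For $3T(01)$, note that $n \in 3T(01) \cap [1,N]$ forces $n/3 \in T(01) \cap [1, N/3]$, so
\[
\#\bigl(3T(01) \cap [1,N]\bigr) \;=\; \#\bigl(T(01) \cap [1, N/3]\bigr),
\]
which by Theorem~\ref{mod3-2} is $o(N/3) = o(N)$. For the shifted sets $3T(01)-1$ and $3T(01)-2$, the same bound applies up to shifting the window by $1$ or $2$: $\#((3T(01)-k) \cap [1,N]) \leq \#(3T(01) \cap [1, N+2])$, which is again $o(N)$. Summing the three estimates shows $\#(A \cap [1,N]) = o(N)$, hence $A$ has density zero.

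Finally, the density of $\{n \leq N : M_n \equiv 0 \pmod 3\}$ equals $1$ minus the density of its complement, which is at most the density of $A$, which is $0$; therefore the density is $1$. There is no real obstacle here beyond the bookkeeping in going from $T(01)$ having density zero to the translated-and-dilated copies having density zero, which is a routine change of variables.
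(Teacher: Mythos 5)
Your proposal is correct and follows essentially the same route as the paper: both reduce the statement to showing that the three sets $3T(01)-k$, $k\in\{0,1,2\}$, have density zero, which follows from Theorem~\ref{mod3-2}, so that by Theorem~\ref{mod3-1} the exceptional set $\{n : M_n \not\equiv 0 \bmod 3\}$ has density zero. Your explicit change-of-variables bookkeeping for the dilated and shifted copies of $T(01)$ is just a more detailed write-up of the step the paper states in one line.
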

\begin{proof}
Since the asymptotic density of $T(\, 01 \,)$ is zero, so is the asymptotic density of the sets $3T(\, 01 \,) - k$ for $k \in \{0, 1, 2\}$. Therefore theorem~\ref{mod3-1} implies that
$$
\lim_{N \to \infty} \frac {1}{N} \# \{n \leq N : M_n \equiv \,   _-^+ \, 1 \mod 3 \} = 0
$$
and the result follows.
\end{proof}

\bigskip
\bibliographystyle{plain}
\begin{small}
\bibliography{ref}
\end{small}

\end{document}